\documentclass{amsart}

\usepackage{amsfonts,amsmath,amssymb,amsthm,bbm}
\usepackage{mathrsfs,mathtools,stmaryrd}
\usepackage{xspace,mydef}
\usepackage{xcolor}
\usepackage{hyperref,cleveref}
\usepackage{etoolbox}
\DeclareMathAlphabet{\mathpzc}{OT1}{pzc}{m}{it}

\newcommand{\TheTitle}{Optimal control of fractional diffusion with Dirac measures}
\newcommand{\ShortTitle}{Optimal control of fractional diffusion with Dirac measures}

\theoremstyle{definition}
\newtheorem{definition}{Definition}
\newtheorem{remark}{Remark}

\theoremstyle{plain}
\newtheorem{theorem}{Theorem}
\newtheorem{proposition}{Proposition}

\begin{document}

\title[\ShortTitle]{\TheTitle}

\author{Enrique Ot\'arola}
\address[E.~Ot\'arola]{
  Departamento de Matem\'atica, Universidad T\'ecnica Federico Santa Mar\'ia,
  Valpara\'iso,
  Chile.
}
\email{enrique.otarola@usm.cl}
\urladdr{http://eotarola.mat.utfsm.cl/}

\author{Abner J. Salgado}
\address[A.J~Salgado]{
  Department of Mathematics, University of Tennessee,
  Knoxville, TN,
  37996,
  USA.
}
\email{asalgad1@utk.edu}
\urladdr{https://math.utk.edu/people/abner-salgado/}

\keywords{PDE-constrained optimization; spectral fractional Laplacian; Dirac measures; optimality conditions ;finite elements.}

\subjclass[2020]{35R06,   
35R11,    
49J20,    
49M25,    
65N12,    
65N30.    
}

\begin{abstract}
We study a PDE-constrained optimization problem for an elliptic equation with the spectral fractional Laplacian and a linear combination of Dirac measures as the forcing term; the controls are the amplitudes of these singular sources. We prove existence and uniqueness of an optimal solution and derive first-order optimality conditions. We then propose a discretization based on finite elements. Since the set of admissible controls is finite dimensional, the control variable itself does not require discretization. We conclude by deriving a priori error bounds.
\end{abstract}

\maketitle

\begin{center}
  \emph{We dedicate this work to the memory of Ricardo Dur\'an.}
\end{center}

\section{Introduction}
\label{sec:introduction}

Let $\Omega \subset \mathbb{R}^{d}$, with $d \in \{2,3\}$, be an open, bounded, and convex polytope with boundary $\partial \Omega$, and let $s \in (\tfrac{d}{4},1)$.  We denote by $(-\Delta)^{s}$ the spectral fractional Laplacian, with homogeneous Dirichlet boundary conditions built into this definition.  Let $\calD$ be a finite subset of $\Omega$ with cardinality $\ell \coloneqq \# \calD$, and let $\delta_{z}$ denote the Dirac measure supported at $z \in \calD$. Given a desired state $u_{\Omega} \in L^{2}(\Omega)$ and a regularization parameter $\alpha > 0$, we introduce the cost functional
\begin{equation}\label{eq:cost_functional}
  J(u,\mathbf{q}) \coloneqq \frac{1}{2} \| u - u_{\Omega} \|_{L^{2}(\Omega)}^{2}
          + \frac{\alpha}{2} \| \mathbf{q} \|_{\mathbb{R}^{\ell}}^{2}.
\end{equation}
The PDE-constrained optimization problem we consider is as follows: Minimize $J(u,\mathbf{q})$ subject to the fractional PDE
\begin{equation}\label{eq:state_equation}
  (-\Delta)^{s} u = \sum_{z \in \calD} q_{z} \delta_{z}
  \quad \text{in } \Omega,
\end{equation}
and the control constraints $\mathbf{q} \in \mathbf{Q}_{ad}$, where
$
  \mathbf{Q}_{ad} \coloneqq \left\{ \mathbf{q} \in \mathbb{R}^{\ell} :
  a_{z} \leq q_{z} \leq b_{z} \ \ \forall z \in \calD \right\}.
$
The amplitudes of the singular sources in \eqref{eq:state_equation} are collected in the vector $\mathbf{q} = \{ q_{z} \}_{z \in \calD}$, which is our control variable. The control bounds $\mathbf{a} = \{ a_{z} \}_{z \in \calD}$ and $\mathbf{b} = \{ b_{z} \}_{z \in \calD}$ are vectors in $\mathbb{R}^{\ell}$ for which we assume that $a_{z} < b_{z}$ for every $z \in \calD$.

Control problems with point sources arise in applications such as active sound control, where the source amplitudes act as controls \cite{MR2086168}. In our setting, the nonlocal operator $(-\Delta)^s$ and the singular forcing require a formulation beyond the standard energy framework; here we extend the approach of \cite{OtarolaSalgado2026} to $d=3$. This approach requires $s > \tfrac{d}{4}$, which also ensures that the adjoint state is continuous, so that the point evaluations in the optimality conditions are well defined.

\section{Notation and preliminaries}
\label{sec:notation}

We write $A \lesssim B$ whenever $A \leq c B$ with a constant $c>0$ independent of $A$, $B$, and any discretization parameter; $A \gtrsim B$ means $B \lesssim A$, and $A \eqsim B$ stands for $A \lesssim B \lesssim A$.  The Euclidean norm in $\mathbb{R}^{\ell}$ is denoted by $\| \cdot \|_{\mathbb{R}^{\ell}}$. Finally, $\calM(\Omega)$ denotes the space of finite Radon measures on $\Omega$, which can be identified with the dual of $C_{0}(\Omega)$, the space of continuous functions on $\bar{\Omega}$ vanishing on $\partial \Omega$. The symbol $\langle \cdot, \cdot \rangle$ denotes the associated duality pairing.

\subsection{The spectral fractional Laplacian and fractional Sobolev spaces}
\label{sub:spectral}

Consider the problem: Find $(\lambda,\varphi) \in \mathbb{R} \times (H_{0}^{1}(\Omega) \setminus \{0\})$ such that
$
  (\nabla \varphi , \nabla v)_{L^{2}(\Omega)}
  = \lambda \, (\varphi , v)_{L^{2}(\Omega)}
$
for all $v \in H_{0}^{1}(\Omega)$. This problem has a countable family of eigenpairs $\{ (\lambda_{k},\varphi_{k}) \}_{k \in \mathbb{N}} \subset \mathbb{R}_{+} \times (H_{0}^{1}(\Omega)\setminus\{0\})$ such that $\lambda_k \to \infty$ as $k \to \infty$ and $\{\varphi_{k}\}_{k \in \mathbb{N}}$ is an orthonormal basis of $L^{2}(\Omega)$ and an orthogonal basis of $H_{0}^{1}(\Omega)$. For $r \geq 0$, we define the spaces
\begin{equation}\label{eq:Hr_spaces}
  \begin{aligned}
  \mathbb{H}^{r}(\Omega) &\coloneqq
  \left\{ w \in L^2(\Omega): w = \sum_{k=1}^{\infty} w_{k} \varphi_{k},
  \
  \sum_{k=1}^{\infty} \lambda_{k}^{r} |w_{k}|^{2} < \infty \right\} ,
  \\
  \| w \|_{\mathbb{H}^{r}(\Omega)} &\coloneqq
  \left( \sum_{k=1}^{\infty} \lambda_{k}^{r} |w_{k}|^{2} \right)^{\frac{1}{2}},
  \end{aligned}
\end{equation}
where, for $k \in \mathbb{N}$, $w_{k} \coloneqq \int_{\Omega} w \varphi_{k} \, \mathrm{d}x$. For $r > 0$, $\mathbb{H}^{-r}(\Omega)$ denotes the dual space of
$\mathbb{H}^{r}(\Omega)$. Given $s \in (0,1)$ and $w \in C_{0}^{\infty}
(\Omega)$, we define the spectral fractional Laplacian \cite{MR2646117,MR2754080,MR2825595} by
\begin{equation}\label{eq:spectral_fl}
  (-\Delta)^{s} w \coloneqq \sum_{k=1}^{\infty} \lambda_{k}^{s} w_{k} \varphi_{k}.
\end{equation}
By density, $(-\Delta)^{s}$ extends to an isomorphism from $\mathbb{H}^{s}(\Omega)$ onto $\mathbb{H}^{-s}(\Omega)$.

The spaces $\mathbb{H}^{r}(\Omega)$ are the natural ones in this setting. However, in the analysis that follows we need to identify these with the classical fractional Sobolev spaces \cite{MR1742312,MR2328004,MR3343061,MR3356020}.
With equivalent norms, we have
\begin{equation}
  \mathbb{H}^{r}(\Omega) =
  \begin{dcases}
    H^{r}(\Omega), & r \in (0,\tfrac12),
    \\
    H^{1/2}_{00}(\Omega), & r=\frac12,
    \\
    H^{r}_{0}(\Omega), & r \in (\tfrac12,1),
    \\
    H_{0}^{1}(\Omega) \cap H^{r}(\Omega), & r \in (1,2).
  \end{dcases}
\label{eq:characterization}
\end{equation}
This last identification holds because $\Omega$ is convex.

\section{Fractional diffusion under a measure-valued right-hand side}
\label{sec:fractional_diffusion_under_singular_forcing}

We consider the following fractional PDE with a measure-valued right-hand side:
\begin{equation}\label{eq:problem}
  (-\Delta)^{s} \mathfrak{u} = \mu \quad \text{in } \Omega,
  \qquad
  \mu \in \calM(\Omega).
\end{equation}

Given $s \in (\tfrac{d}{4},1)$, we choose $\theta \in (\tfrac{d}{2} - s, s)$, an interval that is nonempty because $s > \tfrac{d}{4}$. The following inequalities will be fundamental in the analysis below:
\begin{equation}
0 < s -\theta < 2s - \tfrac{d}{2} < 2 - \tfrac{d}{2},
\qquad
\tfrac{d}{2} < s + \theta < 2s < 2.
\end{equation}

We now introduce the bilinear form
\begin{align*}
  \calA &: \mathbb{H}^{s-\theta}(\Omega) \times \mathbb{H}^{s+\theta}(\Omega)
  \to \mathbb{R},
  \\
  \calA(v,w) &\coloneqq \sum_{k=1}^{\infty} \lambda_{k}^{s} v_{k} w_{k},
  \quad
  v = \sum_{k=1}^{\infty} v_{k} \varphi_{k},
  \quad
  w = \sum_{k=1}^{\infty} w_{k} \varphi_{k}.
\end{align*}

\begin{definition}[weak solution]\label{def:weak}
A function $\mathfrak{u} \in \mathbb{H}^{s-\theta}(\Omega)$ is a weak solution of \eqref{eq:problem} if
\begin{equation}\label{eq:weak}
  \calA(\mathfrak{u},v) = \langle \mu , v \rangle
  \qquad \forall v \in \mathbb{H}^{s+\theta}(\Omega) .
\end{equation}
\end{definition}

Since $s + \theta > \frac{d}{2}$, \cite[Theorem 6.7]{MR2944369} shows that $H^{s+\theta-1}(\Omega) \hookrightarrow L^{p}(\Omega)$ for every $p \leq p^{\star} \coloneqq 2d/(d-2(s+\theta - 1))$. Moreover, we have $p^{\star}>d$. Thus, the space characterizations in Section~\ref{sub:spectral} and \cite[Theorem 4.12, Part II]{MR2424078} yield
\begin{equation}
 \mathbb{H}^{s + \theta}(\Omega) = H^{s+\theta}(\Omega) \cap H_0^1(\Omega)
 \hookrightarrow W^{1,p^{\star}}_{0}(\Omega)
 \hookrightarrow C_0(\Omega).
 \label{eq:embeddings}
\end{equation}
Consequently, every $\mu \in \calM(\Omega)$ defines a bounded linear functional on $\mathbb{H}^{s+\theta}(\Omega)$, so the right-hand side of \eqref{eq:weak} is well-defined.

\begin{theorem}[well-posedness]
\label{thm:well_posedness}
Given $\mu \in \mathcal{M}(\Omega)$, problem \eqref{eq:weak} has a unique solution $\mathfrak{u} \in \mathbb{H}^{s-\theta}(\Omega)$ satisfying
\begin{equation}
 \label{eq:stability_bound}
 \| \mathfrak{u} \|_{\mathbb{H}^{s-\theta}(\Omega)} \lesssim \| \mu \|_{\mathbb{H}^{-s-\theta}(\Omega)} \lesssim \| \mu \|_{\mathcal{M}(\Omega)}.
\end{equation}
\end{theorem}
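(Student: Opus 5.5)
The plan is to show that $\calA$ is an isometric duality pairing between $\mathbb{H}^{s-\theta}(\Omega)$ and $\mathbb{H}^{s+\theta}(\Omega)$, and to solve \eqref{eq:weak} explicitly in eigenfunction coordinates.

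\emph{Step 1: the functional $\mu$.} By \eqref{eq:embeddings}, the map $v \mapsto \langle \mu, v\rangle$ is a bounded linear functional on $\mathbb{H}^{s+\theta}(\Omega)$. Its norm satisfies
\[
\| \mu \|_{\mathbb{H}^{-s-\theta}(\Omega)} = \sup_{0 \neq v \in \mathbb{H}^{s+\theta}(\Omega)} \frac{\langle \mu, v\rangle}{\| v\|_{\mathbb{H}^{s+\theta}(\Omega)}} \leq \sup_{0 \neq v \in \mathbb{H}^{s+\theta}(\Omega)} \frac{\| \mu\|_{\calM(\Omega)} \| v \|_{L^\infty(\Omega)}}{\| v\|_{\mathbb{H}^{s+\theta}(\Omega)}} \lesssim \| \mu \|_{\calM(\Omega)} .
\]
This gives the second inequality in \eqref{eq:stability_bound}.

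\emph{Step 2: construction of the solution.} Set $\mu_k \coloneqq \langle \mu, \varphi_k\rangle$. This is well defined because each $\varphi_k \in \mathbb{H}^{s+\theta}(\Omega)$; indeed $\varphi_k \in \mathbb{H}^r(\Omega)$ for every $r$. Since $\mathbb{H}^{s+\theta}(\Omega)$ is a weighted $\ell^2$ space in the coordinates $v_k$, its dual $\mathbb{H}^{-s-\theta}(\Omega)$ is identified with the sequences satisfying
\[
\| \mu \|_{\mathbb{H}^{-s-\theta}(\Omega)}^2 = \sum_{k} \lambda_k^{-(s+\theta)} |\mu_k|^2 < \infty .
\]
The identity $\langle \mu, v\rangle = \sum_k \mu_k v_k$ holds for all $v \in \mathbb{H}^{s+\theta}(\Omega)$. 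It follows because the finite truncations $\sum_{k \le N} v_k \varphi_k$ converge to $v$ in $\mathbb{H}^{s+\theta}(\Omega)$ and the functional is continuous there. I then define
\[
\mathfrak{u} \coloneqq \sum_k \lambda_k^{-s}\mu_k \varphi_k ,
\]
and compute
\[
\| \mathfrak{u} \|_{\mathbb{H}^{s-\theta}(\Omega)}^2 = \sum_k \lambda_k^{s-\theta}\lambda_k^{-2s}|\mu_k|^2 = \sum_k \lambda_k^{-s-\theta}|\mu_k|^2 = \| \mu \|_{\mathbb{H}^{-s-\theta}(\Omega)}^2 .
\]
This is in fact equality, so the first bound holds. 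One technicality is that $\mathbb{H}^{s-\theta}(\Omega)$ was defined as a subspace of $L^2(\Omega)$. Since $s-\theta > 0$ and $\lambda_k \ge \lambda_1 > 0$, the series converges in $L^2(\Omega)$, so $\mathfrak{u}$ is a genuine $L^2$ function. Finally,
\[
\calA(\mathfrak{u}, v) = \sum_k \lambda_k^s \lambda_k^{-s}\mu_k v_k = \langle \mu, v\rangle ,
\]
with absolute convergence guaranteed by Cauchy--Schwarz applied with the weights $\lambda_k^{\pm(s+\theta)/2}$. Hence $\mathfrak{u}$ solves \eqref{eq:weak}.

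\emph{Step 3: uniqueness.} If $\calA(w,v)=0$ for all $v \in \mathbb{H}^{s+\theta}(\Omega)$, testing with $v=\varphi_k$ gives $\lambda_k^s w_k = 0$ for every $k$. Thus $w_k = 0$ for all $k$, and $w=0$.

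\emph{Main obstacle.} The only delicate point is justifying $\langle\mu,v\rangle=\sum_k\mu_kv_k$ and the identification of $\mathbb{H}^{-s-\theta}(\Omega)$ with the weighted sequence space. This is handled by the density of finite eigenfunction expansions in $\mathbb{H}^{s+\theta}(\Omega)$ together with the continuity of $\mu$ from Step 1. Alternatively, one could invoke the inf-sup (Banach--Ne\v{c}as) theorem: for $v \in \mathbb{H}^{s-\theta}(\Omega)$, the test function $w=\sum_k \lambda_k^{-\theta}v_k\varphi_k$ satisfies $\calA(v,w)=\|v\|_{\mathbb{H}^{s-\theta}(\Omega)}^2$ and $\|w\|_{\mathbb{H}^{s+\theta}(\Omega)}=\|v\|_{\mathbb{H}^{s-\theta}(\Omega)}$. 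This yields an inf-sup constant equal to $1$ in both arguments.
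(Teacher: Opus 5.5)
Your proof is correct, but it takes a more hands-on route than the paper. The paper cites the two-dimensional argument of its companion work to assert that $\mathcal{A}$ is continuous and satisfies both inf-sup conditions with constant one. It then gets existence, uniqueness and the first bound in one stroke from the Banach--Ne\v{c}as--Babu\v{s}ka theorem, while the second bound comes from \eqref{eq:embeddings}, exactly as in your Step~1.

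You instead use the fact that $\mathcal{A}$ is diagonal in the eigenbasis. After identifying $\mathbb{H}^{-s-\theta}(\Omega)$ with the weighted sequence space, you write the solution explicitly as $\mathfrak{u}=\sum_k\lambda_k^{-s}\langle\mu,\varphi_k\rangle\varphi_k$. The first bound then holds as the identity $\|\mathfrak{u}\|_{\mathbb{H}^{s-\theta}(\Omega)}=\|\mu\|_{\mathbb{H}^{-s-\theta}(\Omega)}$, and uniqueness follows by testing with $\varphi_k$.

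Your route gives a self-contained argument with no appeal to BNB or the external reference. It also yields an explicit spectral formula for $\mathfrak{u}=(-\Delta)^{-s}\mu$, and it shows that nothing depends on $d$ except the embedding. The paper's abstract route is shorter given the citation, and it would still work if $\mathcal{A}$ were not diagonal.

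Your closing remark is precisely the paper's route. To complete it you would also need the second (non-degeneracy) inf-sup condition. It follows symmetrically with $v=\sum_k\lambda_k^{\theta}w_k\varphi_k\in\mathbb{H}^{s-\theta}(\Omega)$, for which $\mathcal{A}(v,w)=\|w\|_{\mathbb{H}^{s+\theta}(\Omega)}^2$ and $\|v\|_{\mathbb{H}^{s-\theta}(\Omega)}=\|w\|_{\mathbb{H}^{s+\theta}(\Omega)}$.
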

\begin{proof}
The two-dimensional argument of \cite[Theorem 3.2]{OtarolaSalgado2026} extends verbatim to $d=3$: the bilinear form $\calA$ is continuous and satisfies the two inf-sup conditions stated therein, both with constant one. Existence, uniqueness, and the first bound in \eqref{eq:stability_bound} thus follow from the BNB theorem \cite[Theorem 2.2]{MR2648380}. The second bound is a consequence of \eqref{eq:embeddings}.
\end{proof}

\section{The optimal control problem}
\label{sec:optimal_control_problem}

The optimal control problem introduced in Section~\ref{sec:introduction} reads as follows:
\begin{equation}
 \label{eq:min}
 \min \{ J(u, \mathbf{q}) :  (u, \mathbf{q}) \in \mathbb{H}^{s-\theta}(\Omega) \times \mathbf{Q}_{ad} \},
\end{equation}
subject to the weak formulation of the state equation: Find $u \in \mathbb{H}^{s-\theta}(\Omega)$ such that
\begin{equation}
\label{eq:state_equation_weak}
\mathcal{A}(u,v) = \sum_{z \in \mathcal{D}} q_z v(z)
\quad
\forall v \in \mathbb{H}^{s + \theta}(\Omega).
\end{equation}

By Theorem \ref{thm:well_posedness}, the state equation is well-posed. Define the control-to-state map $\mathbf{S}: \mathbb{R}^{\ell} \to \mathbb{H}^{s-\theta}(\Omega)$, which maps $\mathbf{q}$ to the unique solution $u=\mathbf{S}\mathbf{q}$ of \eqref{eq:state_equation_weak}; $\mathbf{S}$ is linear and bounded, since \eqref{eq:stability_bound} yields $\| \mathbf{S} \mathbf{q} \|_{\mathbb{H}^{s-\theta}(\Omega)} \lesssim \| \mathbf{q} \|_{\mathbb{R}^{\ell}}$. Moreover, $s - \theta > 0$ gives $\mathbf{S} \mathbf{q} \in L^{2}(\Omega)$, so that the reduced functional $j : \mathbb{R}^{\ell} \to \mathbb{R}$, defined by $j(\mathbf{q}) \coloneqq J(\mathbf{S}\mathbf{q},\mathbf{q})$, is well defined.

\subsection{Existence and uniqueness of an optimal control}

\begin{theorem}[existence and uniqueness]
\label{thm:existence}
The optimal control problem \eqref{eq:min}--\eqref{eq:state_equation_weak} has a unique solution $(\bar{u},\bar{\mathbf{q}}) \in \mathbb{H}^{s-\theta}(\Omega) \times \mathbf{Q}_{ad}$.
\end{theorem}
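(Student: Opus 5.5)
We reduce the problem to minimizing the reduced functional $j(\mathbf{q}) = J(\mathbf{S}\mathbf{q},\mathbf{q})$ over $\mathbf{Q}_{ad}$. This is a finite-dimensional optimization problem, so existence follows from a Weierstrass-type argument and uniqueness from strict convexity. The two formulations are equivalent: for every admissible pair the state equation \eqref{eq:state_equation_weak} forces $u = \mathbf{S}\mathbf{q}$ by Theorem~\ref{thm:well_posedness}. Hence $(\bar u,\bar{\mathbf{q}})$ solves \eqref{eq:min}--\eqref{eq:state_equation_weak} if and only if $\bar{\mathbf{q}}$ minimizes $j$ over $\mathbf{Q}_{ad}$ and $\bar u = \mathbf{S}\bar{\mathbf{q}}$.

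\emph{Existence.} The set $\mathbf{Q}_{ad}$ is a product of compact intervals $[a_z,b_z]$, so it is a nonempty, compact, convex subset of $\mathbb{R}^{\ell}$. The map $\mathbf{S}:\mathbb{R}^{\ell}\to\mathbb{H}^{s-\theta}(\Omega)$ is linear and bounded by \eqref{eq:stability_bound}. Since $s-\theta>0$, the embedding $\mathbb{H}^{s-\theta}(\Omega)\hookrightarrow L^2(\Omega)$ is continuous, because $\lambda_k \ge \lambda_1 >0$ gives $\|w\|_{L^2(\Omega)} \le \lambda_1^{-(s-\theta)/2}\|w\|_{\mathbb{H}^{s-\theta}(\Omega)}$. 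Therefore $\mathbf{q}\mapsto \mathbf{S}\mathbf{q}$ is continuous from $\mathbb{R}^{\ell}$ into $L^2(\Omega)$, and $j$ is continuous on $\mathbb{R}^{\ell}$. A continuous function attains its minimum on a compact set, which gives a minimizer $\bar{\mathbf{q}}\in\mathbf{Q}_{ad}$; we set $\bar u=\mathbf{S}\bar{\mathbf{q}}$.

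\emph{Uniqueness.} The map $\mathbf{q}\mapsto \tfrac12\|\mathbf{S}\mathbf{q}-u_\Omega\|_{L^2(\Omega)}^2$ is convex, being the composition of a convex quadratic with an affine map. The term $\tfrac{\alpha}{2}\|\mathbf{q}\|_{\mathbb{R}^{\ell}}^2$ is strictly convex because $\alpha>0$. Thus $j$ is strictly convex, and a strictly convex function has at most one minimizer over the convex set $\mathbf{Q}_{ad}$. Explicitly, if $\mathbf{q}_1\neq\mathbf{q}_2$ were both minimizers, their midpoint would lie in $\mathbf{Q}_{ad}$ and satisfy $j(\tfrac12(\mathbf{q}_1+\mathbf{q}_2))<\tfrac12 j(\mathbf{q}_1)+\tfrac12 j(\mathbf{q}_2)$, which contradicts minimality. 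The optimal state is then unique as well, since $\bar u=\mathbf{S}\bar{\mathbf{q}}$ is determined by $\bar{\mathbf{q}}$.

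There is no serious obstacle. The only point that needs care is that the state lies only in the weak space $\mathbb{H}^{s-\theta}(\Omega)$, so the tracking term depends on the embedding of this space into $L^2(\Omega)$. That embedding holds because $s-\theta>0$, which the choice of $\theta$ guarantees. Because the control space is finite dimensional, no weak compactness or lower semicontinuity argument is needed.
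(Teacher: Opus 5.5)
Your proposal is correct and follows essentially the same route as the paper. The paper also reduces the problem to minimizing $j$ over the compact set $\mathbf{Q}_{ad}$, obtains existence from the Weierstrass theorem using continuity of $j$ (from $\mathbf{S}$ being linear and bounded), and obtains uniqueness from strict convexity since $\alpha>0$. You additionally justify the embedding $\mathbb{H}^{s-\theta}(\Omega)\hookrightarrow L^2(\Omega)$ and spell out the midpoint argument, which the paper leaves implicit.
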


\begin{proof}
Using $\mathbf{S}$ and $j$, the control problem reduces to minimizing $j$ over $\mathbf{Q}_{ad}$. The set $\mathbf{Q}_{ad}$ is closed and bounded in $\mathbb{R}^{\ell}$, and hence compact, while $j$ is continuous, because $\mathbf{S}$ is linear and bounded, and strictly convex, because $\alpha > 0$. Existence follows from the Weierstrass theorem, and uniqueness is guaranteed by the strict convexity of $j$.
\end{proof}

\subsection{Optimality conditions}

To derive optimality conditions, we introduce the \emph{adjoint problem}:
\begin{equation}\label{eq:adjoint}
p \in \mathbb{H}^{s}(\Omega):
\qquad
\mathcal{B}(p,w) = (u - u_{\Omega}, w )_{L^2(\Omega)}
\qquad
\forall w \in \mathbb{H}^s(\Omega),
\end{equation}
where the bilinear form $\mathcal{B} : \mathbb{H}^s(\Omega) \times \mathbb{H}^s(\Omega) \rightarrow \mathbb{R}$ is defined by
\[
  \mathcal{B}(v,w) \coloneqq \sum_{k=1}^{\infty} \lambda_{k}^{s} v_{k} w_{k}.
\]
The Lax--Milgram lemma immediately yields the well-posedness of \eqref{eq:adjoint}. Since $u - u_{\Omega} \in L^{2}(\Omega)$, the definitions of $(-\Delta)^s$ and the spaces $\mathbb{H}^{r}(\Omega)$ imply that $p \in \mathbb{H}^{2s}(\Omega)$. Moreover, since $2s > d/2$, the characterization from Section~\ref{sub:spectral} and \cite[Theorem 7.34(c)]{MR2424078} yield $\mathbb{H}^{2s}(\Omega) = H^{2s}(\Omega) \cap H_0^1(\Omega) \hookrightarrow C_0(\Omega)$. Consequently, the point value $p(z)$ is well defined for every $z \in \calD$.

\begin{theorem}[optimality conditions]
\label{thm:optimality}
$\bar{\mathbf{q}} \in \mathbf{Q}_{ad}$ is optimal for
\eqref{eq:min}--\eqref{eq:state_equation_weak} if and only if
\begin{equation}\label{eq:variational_inequality}
  \sum_{z \in \calD} \left( \bar{p}(z) + \alpha \bar{q}_{z} \right)
  \left( q_{z} - \bar{q}_{z} \right) \geq 0,
  \qquad \forall \mathbf{q} = \{ q_z \}_{z \in \mathcal{D}}  \in \mathbf{Q}_{ad},
\end{equation}
where $\bar{p}$ solves \eqref{eq:adjoint} with $u$ replaced by $\bar{u} = \mathbf{S}\bar{\mathbf{q}}$.
\end{theorem}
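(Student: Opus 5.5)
We reduce the problem to the first-order characterization of a minimizer of a convex, differentiable function over a convex set. By Theorem~\ref{thm:existence}, the problem is equivalent to minimizing the reduced functional $j$ over $\mathbf{Q}_{ad}$. Since $\mathbf{S}$ is linear and bounded from $\mathbb{R}^{\ell}$ into $\mathbb{H}^{s-\theta}(\Omega)\hookrightarrow L^2(\Omega)$, the functional $j$ is a convex quadratic on $\mathbb{R}^{\ell}$, hence Fréchet differentiable. Because $\mathbf{Q}_{ad}$ is convex, a point $\bar{\mathbf{q}}\in\mathbf{Q}_{ad}$ is a minimizer if and only if $j'(\bar{\mathbf{q}})(\mathbf{q}-\bar{\mathbf{q}})\ge 0$ for all $\mathbf{q}\in\mathbf{Q}_{ad}$. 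Necessity follows from taking one-sided difference quotients along $\bar{\mathbf{q}}+t(\mathbf{q}-\bar{\mathbf{q}})$. Sufficiency follows from the convexity inequality $j(\mathbf{q})\ge j(\bar{\mathbf{q}})+j'(\bar{\mathbf{q}})(\mathbf{q}-\bar{\mathbf{q}})$.

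Direct differentiation gives
\[
j'(\bar{\mathbf{q}})\mathbf{h}=(\bar u-u_\Omega,\mathbf{S}\mathbf{h})_{L^2(\Omega)}+\alpha\,\bar{\mathbf{q}}\cdot\mathbf{h}.
\]
The remaining task is to prove the adjoint identity
\[
(\bar u-u_\Omega,\mathbf{S}\mathbf{h})_{L^2(\Omega)}=\sum_{z\in\calD}h_z\,\bar p(z).
\]
Once this holds, setting $\mathbf{h}=\mathbf{q}-\bar{\mathbf{q}}$ yields \eqref{eq:variational_inequality}.

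The adjoint identity is the main obstacle, because $\mathbf{S}\mathbf{h}$ has only $\mathbb{H}^{s-\theta}$ regularity. It therefore cannot be inserted as a test function in \eqref{eq:adjoint}, which requires test functions in $\mathbb{H}^s(\Omega)$. We instead exploit the extra regularity of $\bar p$. As noted after \eqref{eq:adjoint}, $\bar p\in\mathbb{H}^{2s}(\Omega)$, and since $s+\theta<2s$ we have $\bar p\in\mathbb{H}^{s+\theta}(\Omega)$. Thus $\bar p$ is an admissible test function in the state equation \eqref{eq:state_equation_weak} for $w=\mathbf{S}\mathbf{h}$, which gives
\[
\calA(w,\bar p)=\sum_{z}h_z\bar p(z).
\]
Here the point values make sense by \eqref{eq:embeddings}, or directly by $\mathbb{H}^{2s}\hookrightarrow C_0(\Omega)$.

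It remains to check $\calA(w,\bar p)=(\bar u-u_\Omega,w)_{L^2(\Omega)}$. We argue spectrally. Testing \eqref{eq:adjoint} with $w=\varphi_k\in\mathbb{H}^s(\Omega)$ gives $\lambda_k^s\bar p_k=(\bar u-u_\Omega)_k$ for every $k$. Therefore
\[
\calA(w,\bar p)=\sum_k\lambda_k^s w_k\bar p_k=\sum_k w_k(\bar u-u_\Omega)_k=(w,\bar u-u_\Omega)_{L^2(\Omega)}
\]
by Parseval. The series converge absolutely, since $\sum_k\lambda_k^s|w_k||\bar p_k|\le\|w\|_{\mathbb{H}^{s-\theta}}\|\bar p\|_{\mathbb{H}^{s+\theta}}$ by Cauchy--Schwarz, and both $w$ and $\bar u-u_\Omega$ lie in $L^2(\Omega)$. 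Combining this with the previous step proves the adjoint identity and hence the theorem.
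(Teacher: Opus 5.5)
Your proposal is correct and follows essentially the same route as the paper: the convex first-order characterization for the reduced functional $j$, then testing the state equation for $\mathbf{S}(\mathbf{q}-\bar{\mathbf{q}})$ with $\bar p \in \mathbb{H}^{2s}(\Omega) \hookrightarrow \mathbb{H}^{s+\theta}(\Omega)$, and finally identifying $\mathcal{A}(u-\bar u,\bar p)$ with $(\bar u - u_\Omega, u-\bar u)_{L^2(\Omega)}$. The only difference is in that last identification. You read off $\lambda_k^s \bar p_k = (\bar u - u_\Omega)_k$ by testing with $\varphi_k$ and then apply Parseval, whereas the paper approximates $u-\bar u$ in $\mathbb{H}^{s-\theta}(\Omega)$ by $C_0^\infty(\Omega)$ functions, tests the adjoint equation with them, and passes to the limit; your spectral version is slightly more direct and self-contained.
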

\begin{proof}
Since $\mathbf{S}$ is linear and bounded, it is Fr\'echet differentiable.  The chain rule then shows that $j$ is differentiable and that, for $\mathbf{q},\mathbf{r} \in \mathbb{R}^{\ell}$,
$
  j'(\mathbf{q}) \mathbf{r}
  = (\mathbf{S}\mathbf{q} - u_{\Omega} , \mathbf{S}\mathbf{r} )_{L^{2}(\Omega)}
  +
  \alpha \sum_{z \in \calD} q_{z} r_{z} .
$
Thus, since $j$ is convex and $\mathbf{Q}_{ad}$ is convex, $\bar{\mathbf{q}} \in \mathbf{Q}_{ad}$ is optimal if and only if, for all $\mathbf{q} = \{ q_z \}_{z \in \mathcal{D}} \in \mathbf{Q}_{ad}$,
\begin{equation}
j'(\bar{\mathbf{q}}) (\mathbf{q} - \bar{\mathbf{q}})
=
( \bar{u} - u_{\Omega} , u - \bar{u}  )_{L^{2}(\Omega)}
+
\alpha \sum_{z \in \calD} \bar{q}_{z} (q_{z} - \bar{q}_z) \geq 0,
\label{eq:basic_variational_inequality}
\end{equation}
where $u = \mathbf{S} \mathbf{q}$ and $\bar{u} = \mathbf{S}\bar{\mathbf{q}}$. It remains to rewrite $( \bar{u} - u_{\Omega} , u - \bar{u}  )_{L^{2}(\Omega)}$ in \eqref{eq:basic_variational_inequality}. By linearity, $u - \bar{u}$ solves \eqref{eq:state_equation_weak} with $\mathbf{q}$ replaced by $\mathbf{q} - \bar{\mathbf{q}}$. Let $\bar{p}$ solve \eqref{eq:adjoint} with $u$ replaced by $\bar{u} = \mathbf{S}\bar{\mathbf{q}}$. Since $s + \theta < 2s$, we have $\bar{p} \in \mathbb{H}^{2s}(\Omega) \hookrightarrow \mathbb{H}^{s+\theta}(\Omega)$. Setting $v = \bar{p}$ in the equation satisfied by $u - \bar{u}$ yields
\begin{equation}
\label{eq:aux_1}
\mathcal{A}(u - \bar u,\bar p) = \sum_{z \in \mathcal{D}} (q_z - \bar{q}_z) \bar{p}(z).
\end{equation}
Let $\{ \mathtt{u}_n \}_{n \in \mathbb{N}} \subset C_0^{\infty}(\Omega)$ be such that $\mathtt{u}_n \to u - \bar{u}$ in $\mathbb{H}^{s-\theta}(\Omega)$ as $n \uparrow \infty$. Using $\mathtt{u}_n$ as a test function in \eqref{eq:adjoint}, we obtain $\mathcal{B}(\bar{p},\mathtt{u}_n) = (\bar{u} - u_{\Omega},\mathtt{u}_n)_{L^2(\Omega)}$ for every $n \in \mathbb{N}$. The convergence $\mathtt{u}_n \to u-\bar{u}$ in $\mathbb{H}^{s-\theta}(\Omega)$, and hence in $L^2(\Omega)$, yields $(\bar{u} - u_{\Omega},\mathtt{u}_n)_{L^2(\Omega)} \rightarrow (\bar{u} - u_{\Omega},u - \bar{u})_{L^2(\Omega)}$ as $n \uparrow \infty$. On the other hand, since $\bar{p} \in \mathbb{H}^{s+\theta}(\Omega)$,
$
 \mathcal{B}(\bar{p},\mathtt{u}_n) = \mathcal{A}(\mathtt{u}_n,\bar{p}) \rightarrow \mathcal{A}(u - \bar{u}, \bar{p})
$
as $n \uparrow \infty$. Combining these limits, we obtain $\mathcal{A}(u - \bar{u}, \bar{p}) = (\bar{u} - u_{\Omega},u - \bar{u})_{L^2(\Omega)}$. We finally invoke \eqref{eq:aux_1} to deduce that
\[
(\bar{u} - u_{\Omega},u - \bar{u})_{L^2(\Omega)}
=
\sum_{z \in \mathcal{D}} (q_z - \bar{q}_z) \bar{p}(z).
\]
Substituting this identity into \eqref{eq:basic_variational_inequality} yields \eqref{eq:variational_inequality} and concludes the proof.
\end{proof}

\section{Discretization}
\label{sec:discretization}

In this section, we develop a finite element discretization of the optimal control problem \eqref{eq:min}--\eqref{eq:state_equation_weak} and derive error bounds.

\subsection{A finite element method for fractional PDEs}
We let $\mathbb{T} = \{ \mathcal{T}_{h} \}_{h>0}$ be a quasiuniform family of conforming simplicial meshes of $\bar{\Omega}$ and denote by $\mathbb{V}(\mathcal{T}_{h})$ the space of continuous piecewise linear functions on $\mathcal{T}_{h}$ that vanish on $\partial \Omega$. We follow \cite{MR5023054}, choose $\mathcal{Y} > 0$ and $K \in \mathbb{N}$, and define the parameters
\begin{equation}
\label{eq:quadrature_parameters}
  \Upsilon_{k} \coloneqq \left( \frac{\eta_{k}}{\mathcal{Y}} \right)^{2},
  \qquad
  \psi_{k} \coloneqq \frac{4 \sin (\pi s)}
                   {\Upsilon_{k}^{s} \mathcal{Y}^{2} \pi J_{1-s}(\eta_{k})^{2}},
  \qquad k = 1,\dots,K,
\end{equation}
where $J_{\nu}$ denotes the Bessel function of the first kind of order $\nu$ and $\eta_{k}$ is the $k$-th positive root of $J_{-s}$. Given $\mathfrak{f} \in L^{2}(\Omega)$, for each $k \in \{ 1, \dots, K \}$, we let $\Psi_{k} \in \mathbb{V}(\mathcal{T}_{h})$ solve
\begin{equation}
\label{eq:reaction_diffusion}
  \int_{\Omega} \nabla \Psi_{k} \cdot \nabla v_{h} \, \mathrm{d}x
  + \Upsilon_{k} \int_{\Omega} \Psi_{k} v_{h} \, \mathrm{d}x
  = \int_{\Omega} \mathfrak{f} v_{h} \, \mathrm{d}x
  \qquad \forall v_{h} \in \mathbb{V}(\mathcal{T}_{h}),
\end{equation}
and define $\Psi_{h,\mathcal{Y}}^{K} \coloneqq \sum_{k=1}^{K} \psi_{k} \Psi_{k} \in \mathbb{V}(\mathcal{T}_{h})$. The following is the main result of \cite{MR5023054}.

\begin{proposition}[$L^{2}$-right-hand side]
\label{prop:L2_data}
Let $\mathfrak{f} \in L^{2}(\Omega)$. If $\mathcal{Y} \eqsim 2s |\log h|$ and $K \eqsim \mathcal{Y}/h$, then
\[
  \| (-\Delta)^{-s} \mathfrak{f} - \Psi_{h,\mathcal{Y}}^{K} \|_{L^{2}(\Omega)}
  \lesssim h^{2s} \| \mathfrak{f} \|_{L^{2}(\Omega)}.
\]
\end{proposition}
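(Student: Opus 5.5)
The proof splits the error into a quadrature (semidiscrete-in-$k$) part and a finite element part. Let $u = (-\Delta)^{-s}\mathfrak{f} = \sum_j \lambda_j^{-s}\mathfrak{f}_j\varphi_j$, and let $\Psi^{K}_{\mathcal{Y}} \coloneqq \sum_{k=1}^{K}\psi_k (-\Delta+\Upsilon_k)^{-1}\mathfrak{f}$ be the continuous (undiscretized in space) counterpart of $\Psi_{h,\mathcal{Y}}^{K}$. We write
\[
  u - \Psi_{h,\mathcal{Y}}^{K} = \bigl(u - \Psi^{K}_{\mathcal{Y}}\bigr) + \bigl(\Psi^{K}_{\mathcal{Y}} - \Psi^{K}_{h,\mathcal{Y}}\bigr)
\]
and bound each term by $h^{2s}\|\mathfrak{f}\|_{L^2(\Omega)}$.

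\textbf{Quadrature part.} By diagonalization in the eigenbasis $\{\varphi_j\}$, it suffices to estimate, uniformly in $\lambda \ge \lambda_1$, the scalar error
\[
  E(\lambda) \coloneqq \Bigl|\lambda^{-s} - \sum_{k=1}^{K}\frac{\psi_k}{\lambda+\Upsilon_k}\Bigr| .
\]
The nodes $\Upsilon_k = (\eta_k/\mathcal{Y})^2$ and weights $\psi_k$ are those of a Fourier--Bessel (Dini/Hankel-type) expansion. This comes from writing $\lambda^{-s}$ through the extension/Caffarelli--Silvestre representation truncated to $y \in (0,\mathcal{Y})$: the Neumann-to-Dirichlet map of the truncated extension problem is expanded in eigenfunctions $J$-type of the $y$-operator with zeros $\eta_k$ of $J_{-s}$. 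Two error sources then appear. The truncation at $\mathcal{Y}$ produces an error of order $e^{-c\sqrt{\lambda}\mathcal{Y}}$, which is exponentially small, so $\mathcal{Y} \eqsim 2s|\log h|$ makes it $\lesssim h^{2s}$ since $\lambda \ge \lambda_1$. Cutting the series at $K$ terms leaves a tail $\sum_{k>K}\psi_k/(\lambda+\Upsilon_k)$. Using the asymptotics $\eta_k \eqsim k$ and $J_{1-s}(\eta_k)^2 \eqsim \eta_k^{-1}$, we get $\psi_k \eqsim \mathcal{Y}^{2s-2}\eta_k^{1-2s}$, so the tail is bounded by $\sum_{k>K}\psi_k\Upsilon_k^{-1} \eqsim \mathcal{Y}^{2s}\sum_{k>K} k^{-1-2s} \eqsim (\mathcal{Y}/K)^{2s} \eqsim h^{2s}$. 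This is exactly where the choice $K \eqsim \mathcal{Y}/h$ enters. Parseval then gives $\|u - \Psi^{K}_{\mathcal{Y}}\|_{L^2(\Omega)} \le \sup_\lambda E(\lambda)\,\|\mathfrak{f}\|_{L^2(\Omega)} \lesssim h^{2s}\|\mathfrak{f}\|_{L^2(\Omega)}$.

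\textbf{Finite element part.} For each $k$, $\Psi_k$ is the Galerkin approximation of $w_k \coloneqq (-\Delta+\Upsilon_k)^{-1}\mathfrak{f}$. Summing the naive $O(h^2)$ bounds would lose a factor $\sum_k\psi_k$, which is not uniformly controlled, so the errors must be measured in a $\Upsilon$-robust way and interpolated. On a convex domain, duality arguments for the reaction--diffusion problem give the two bounds
\[
  \|w_k - \Psi_k\|_{L^2(\Omega)} \lesssim \min\bigl\{h^2,\ \Upsilon_k^{-1}\bigr\}\,\|\mathfrak{f}\|_{L^2(\Omega)} .
\]
The first is the Aubin--Nitsche estimate, valid uniformly in $\Upsilon_k$ thanks to $H^2$-regularity. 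The second follows from $L^2$-stability of both $w_k$ and $\Psi_k$ with constant $\Upsilon_k^{-1}$.

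Hence $\sum_k \psi_k \|w_k-\Psi_k\|_{L^2(\Omega)} \lesssim \|\mathfrak{f}\|_{L^2(\Omega)}\sum_k \psi_k\min\{h^2,\Upsilon_k^{-1}\}$. We split the sum at $\Upsilon_k \eqsim h^{-2}$, that is, at $\eta_k \eqsim \mathcal{Y}/h$. For $\Upsilon_k \lesssim h^{-2}$, we have $\sum \psi_k h^2 \eqsim h^2\mathcal{Y}^{2s-2}\sum_{k\lesssim \mathcal{Y}/h}k^{1-2s} \eqsim h^2 \mathcal{Y}^{2s-2}(\mathcal{Y}/h)^{2-2s} = h^{2s}$. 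For $\Upsilon_k \gtrsim h^{-2}$, the sum is the same tail as before and is also $\lesssim h^{2s}$. This gives the desired $h^{2s}$ bound, with no logarithmic loss in this part.

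The main obstacle is the uniform-in-$\lambda$ quadrature estimate. I would first establish the weight asymptotics $\psi_k \eqsim \mathcal{Y}^{2s-2}\eta_k^{1-2s}$ via standard Bessel zero asymptotics, and then verify the identity $\sum_{k\ge1}\psi_k/(\lambda+\Upsilon_k) = \lambda^{-s}\bigl(1+O(e^{-c\sqrt{\lambda}\mathcal{Y}})\bigr)$ from the Fourier--Bessel expansion of the truncated extension solution. With those two facts in hand, both parts reduce to the elementary sum estimates above.
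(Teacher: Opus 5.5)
The paper gives no argument for this statement. It is quoted as the main result of \cite{MR5023054}, so the whole analysis, including how the parameter choices are to be read, is delegated to that reference. Your proposal instead gives a self-contained proof, built on the truncated-extension/Fourier--Bessel structure that generates these nodes and weights, and it holds up. The identity you defer is true and elementary. The function $F(w)\coloneqq w^{-s}I_s(\sqrt{w})/I_{-s}(\sqrt{w})$ is meromorphic with simple poles at $w=-\eta_k^2$, and it decays on circles between consecutive poles. Using the Wronskian identity $I_s(z)I_{1-s}(z)=-2\sin(\pi s)/(\pi z)$ at the zeros $z=\pm i\eta_k$ of $I_{-s}$, its residue at $w=-\eta_k^2$ is $4\sin(\pi s)\eta_k^{-2s}/(\pi J_{1-s}(\eta_k)^2)=\mathcal{Y}^{2-2s}\psi_k$. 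Hence
\[
  \sum_{k=1}^{\infty}\frac{\psi_k}{\lambda+\Upsilon_k}=\mathcal{Y}^{2s}F(\lambda\mathcal{Y}^2)=\lambda^{-s}\,\frac{I_s(\sqrt{\lambda}\,\mathcal{Y})}{I_{-s}(\sqrt{\lambda}\,\mathcal{Y})},
  \qquad
  1-\frac{I_s(t)}{I_{-s}(t)}=\frac{2\sin(\pi s)}{\pi}\,\frac{K_s(t)}{I_{-s}(t)}\lesssim e^{-2t},
\]
with $K_s$ the modified Bessel function of the second kind, exactly as you anticipate. Your weight asymptotics and both tail sums are also right. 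What your route buys is transparency: everything reduces to a scalar rational-approximation bound for $\lambda^{-s}$ plus $\Upsilon$-robust reaction--diffusion estimates, and one sees exactly where $K\eqsim\mathcal{Y}/h$ enters and that the spatial part costs no logarithm. The citation buys brevity.

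Three points need tightening. First, the truncation term is $\lesssim\lambda_1^{-s}e^{-2\sqrt{\lambda_1}\mathcal{Y}}$, so it is $\lesssim h^{2s}$ only if the lower hidden constant in $\mathcal{Y}\eqsim 2s|\log h|$ is at least $1/(2\sqrt{\lambda_1})$; that is how the hypothesis must be read. Second, Aubin--Nitsche alone gives $\|w_k-\Psi_k\|_{L^2(\Omega)}\lesssim h^2(1+\Upsilon_k h^2)\|\mathfrak{f}\|_{L^2(\Omega)}$, not an $\Upsilon_k$-uniform $h^2$. Your $\min\{h^2,\Upsilon_k^{-1}\}$ is nonetheless correct once combined with the stability bound $2\Upsilon_k^{-1}\|\mathfrak{f}\|_{L^2(\Omega)}$. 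Third, since $\Upsilon_k h^2\lesssim 1$ for every $k\le K\eqsim\mathcal{Y}/h$, the \emph{naive} sum $h^2\sum_{k\le K}\psi_k\eqsim h^{2s}$ already suffices, contrary to your remark. The $\min$ only makes the spatial bound independent of $K$.
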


When $\mathfrak{f} = \mu \in \mathcal{M}(\Omega)$, we apply the scheme of \cite{MR5023054} to a suitably regularized right-hand side. Given $\varepsilon > 0$, we let $\mu_{\varepsilon} \coloneqq \mu \star \rho_{\varepsilon}$, where $\rho_{\varepsilon}(x) = \varepsilon^{-d} \rho(x/\varepsilon)$ and $\rho \in C_0^\infty(\mathbb{R}^d)$ is a standard mollifier satisfying $\rho \geq 0$, $\operatorname{supp} \rho \subset B_{1}(0)$, and $\int_{\mathbb{R}^d} \rho \, \mathrm{d}x = 1$. For $d \in \{2,3\}$, we have the following crucial estimates:
\begin{align}
 \| \mu_{\varepsilon} \|_{L^2(\Omega)} & \leq \| \rho_{\varepsilon} \|_{L^2(\mathbb{R}^d)} \| \mu \|_{\mathcal{M}(\Omega)}
 =  \varepsilon^{-d/2} \| \rho \|_{L^2(\mathbb{R}^d)} \| \mu \|_{\mathcal{M}(\Omega)},
 \label{eq:regularization_first}
 \\
 \| \mu - \mu_{\varepsilon} \|_{\mathbb{H}^{-s-\theta}(\Omega)} & \lesssim \varepsilon^{\gamma}\| \mu \|_{\mathcal{M}(\Omega)},
 \quad
 \gamma \coloneqq s + \theta - \tfrac{d}{2} > 0.
 \label{eq:regularization_second}
\end{align}
The bound in \eqref{eq:regularization_first} follows from \cite[Proposition 8.49]{MR1681462} and a change of variables, while \eqref{eq:regularization_second} follows upon adapting the arguments in \cite[Proposition 7.4]{OtarolaSalgado2026} to $d \in \{2,3\}$.

Define $\mathfrak{u}_{h,\mathcal{Y}}^{K,\varepsilon} \coloneqq \sum_{k=1}^{K} \psi_{k} \mathfrak{u}_{k}^{\varepsilon} \in \mathbb{V}(\mathcal{T}_{h})$, where $\mathfrak{u}_k^{\varepsilon}$ solves \eqref{eq:reaction_diffusion} with $\mathfrak{f}$ replaced by $\mu_{\varepsilon}$.

\begin{theorem}[measure-valued right-hand side]
\label{thm:measure_data}
Let $\mu \in \calM(\Omega)$, and let $\mathfrak{u}$ solve \eqref{eq:weak}. If $\mathcal{Y} \eqsim 2s|\log h|$, $K \eqsim \mathcal{Y}/h$ and
$\varepsilon \eqsim h$, then, for $h \leq 1$, we have
\begin{equation}
\label{eq:rate_measure_data}
  \| \mathfrak{u} - \mathfrak{u}_{h,\mathcal{Y}}^{K,\varepsilon} \|_{L^{2}(\Omega)}
  \lesssim h^{s+\theta-\frac{d}{2}} \| \mu \|_{\calM(\Omega)} .
\end{equation}
\end{theorem}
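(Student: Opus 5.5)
We split the error by inserting the exact solution of the problem with the regularized datum. Let $\mathfrak{u}_{\varepsilon} \coloneqq (-\Delta)^{-s}\mu_{\varepsilon}$; since $\mu_{\varepsilon} \in L^{2}(\Omega)$, this is the solution of \eqref{eq:weak} with $\mu$ replaced by $\mu_{\varepsilon}$. By the triangle inequality,
\[
  \| \mathfrak{u} - \mathfrak{u}_{h,\mathcal{Y}}^{K,\varepsilon} \|_{L^{2}(\Omega)}
  \leq
  \| \mathfrak{u} - \mathfrak{u}_{\varepsilon} \|_{L^{2}(\Omega)}
  +
  \| \mathfrak{u}_{\varepsilon} - \mathfrak{u}_{h,\mathcal{Y}}^{K,\varepsilon} \|_{L^{2}(\Omega)}
  \eqqcolon \mathrm{I} + \mathrm{II}.
\]
Term $\mathrm{I}$ is the regularization error and term $\mathrm{II}$ is the discretization error for an $L^{2}$ right-hand side.

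For $\mathrm{I}$, we note that $\mathfrak{u} - \mathfrak{u}_{\varepsilon}$ is the weak solution of \eqref{eq:weak} with datum $\mu - \mu_{\varepsilon}$. The problem is linear, and Theorem~\ref{thm:well_posedness} holds for any datum in $\mathbb{H}^{-s-\theta}(\Omega)$, because the BNB argument only uses this dual norm. The first inequality in \eqref{eq:stability_bound} then gives
\[
  \| \mathfrak{u} - \mathfrak{u}_{\varepsilon} \|_{\mathbb{H}^{s-\theta}(\Omega)}
  \lesssim \| \mu - \mu_{\varepsilon} \|_{\mathbb{H}^{-s-\theta}(\Omega)}.
\]
Since $s - \theta > 0$, we have $\|w\|_{L^{2}(\Omega)} \leq \lambda_{1}^{-(s-\theta)/2} \|w\|_{\mathbb{H}^{s-\theta}(\Omega)}$. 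Combining this with \eqref{eq:regularization_second} yields $\mathrm{I} \lesssim \varepsilon^{s+\theta-d/2} \| \mu \|_{\mathcal{M}(\Omega)}$.

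For $\mathrm{II}$, the function $\mathfrak{u}_{h,\mathcal{Y}}^{K,\varepsilon}$ is exactly the approximation $\Psi_{h,\mathcal{Y}}^{K}$ of Proposition~\ref{prop:L2_data} with $\mathfrak{f} = \mu_{\varepsilon}$. The proposition and \eqref{eq:regularization_first} give
\[
  \mathrm{II} \lesssim h^{2s} \| \mu_{\varepsilon} \|_{L^{2}(\Omega)}
  \lesssim h^{2s} \varepsilon^{-d/2} \| \mu \|_{\mathcal{M}(\Omega)}.
\]
With $\varepsilon \eqsim h$, we obtain $\mathrm{I} \lesssim h^{s+\theta-d/2}\|\mu\|_{\mathcal{M}(\Omega)}$ and $\mathrm{II} \lesssim h^{2s - d/2}\|\mu\|_{\mathcal{M}(\Omega)}$. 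Because $\theta < s$ and $h \leq 1$, we have $h^{2s-d/2} \leq h^{s+\theta-d/2}$, so \eqref{eq:rate_measure_data} follows.

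The main point to check is $\mathrm{II}$. The hidden constant in Proposition~\ref{prop:L2_data} must not depend on $\mathfrak{f}$, so that it can be applied to the $h$-dependent datum $\mu_{\varepsilon}$; this is how the proposition is stated. Also, $\mu_{\varepsilon}$ must be a genuine $L^{2}(\Omega)$ function obtained by restriction to $\Omega$, even when $\operatorname{supp}\mu$ lies near $\partial\Omega$. The bound \eqref{eq:regularization_first} handles this, since restricting to $\Omega$ only decreases the norm. The rest of the argument is bookkeeping of exponents.
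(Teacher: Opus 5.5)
Your proof is correct and follows the paper's argument essentially step for step. You split the error through $\mathfrak{u}_{\varepsilon}$, bound the discretization term by Proposition~\ref{prop:L2_data} together with \eqref{eq:regularization_first}, and bound the regularization term by \eqref{eq:stability_bound}, the $L^{2}$ embedding, and \eqref{eq:regularization_second}. You then conclude with $h^{2s-d/2}\leq h^{s+\theta-d/2}$, using $\theta<s$ and $h\leq 1$, exactly as the paper does.
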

\begin{proof}
Let $\mathfrak{u}_{\varepsilon}$ solve $(-\Delta)^{s}\mathfrak{u}_{\varepsilon}
= \mu_{\varepsilon}$ in $\Omega$. Proposition~\ref{prop:L2_data} gives $\| \mathfrak{u}_{\varepsilon} - \mathfrak{u}_{h,\mathcal{Y}}^{K,\varepsilon}
\|_{L^{2}(\Omega)} \lesssim h^{2s} \| \mu_{\varepsilon} \|_{L^2(\Omega)} \lesssim h^{2s} \varepsilon^{-d/2} \| \mu
\|_{\calM(\Omega)}$, where we also used \eqref{eq:regularization_first}. Since $\mathfrak{u} - \mathfrak{u}_{\varepsilon}$ solves problem \eqref{eq:weak} with
$\mu$ replaced by $\mu - \mu_{\varepsilon}$, the continuous embedding $\mathbb{H}^{s-\theta}(\Omega) \hookrightarrow L^2(\Omega)$, together with \eqref{eq:stability_bound} and \eqref{eq:regularization_second}, yields $\| \mathfrak{u} - \mathfrak{u}_{\varepsilon}
\|_{L^{2}(\Omega)} \lesssim
\| \mathfrak{u} - \mathfrak{u}_{\varepsilon}
\|_{\mathbb{H}^{s-\theta}(\Omega)}
\lesssim
\varepsilon^{\gamma} \| \mu \|_{\calM(\Omega)}$. Setting $\varepsilon \eqsim h$ and using $\theta < s$ and $h \leq 1$, we arrive at $h^{2s-d/2} \leq
h^{\gamma}$. The triangle inequality then yields \eqref{eq:rate_measure_data}.
\end{proof}

\subsection{A finite element method for the optimal control problem}
\label{sub:discrete_control}

Recall that, for the optimal control problem, the state $u$ solves \eqref{eq:problem} with right-hand side given by
\[
  \mu = \sum_{z \in \calD} q_{z} \delta_{z}.
\]
For each $z \in \calD$ and $\varepsilon>0$, we define $\delta_{z,\varepsilon} \coloneqq \delta_{z} \star \rho_{\varepsilon}$. We then set
\[
  \mu_{\varepsilon} \coloneqq \sum_{z \in \calD} q_{z} \delta_{z,\varepsilon}.
\]
Next, following Theorem~\ref{thm:measure_data}, we choose the discretization and regularization parameters so that $\mathcal{Y} \eqsim 2s|\log h|$, $K \eqsim \mathcal{Y}/h$, and $\varepsilon \eqsim h$. We define the discrete control-to-state map
\begin{equation}
  \mathbf{S}_{h} : \mathbb{R}^{\ell} \to \mathbb{V}(\mathcal{T}_{h}),
  \qquad
  \mathbf{S}_{h}\mathbf{q} \coloneqq u_{h,\mathcal{Y}}^{K,\varepsilon}.
  \label{eq:discrete_control_to_state_map}
\end{equation}
Since $\mu_{\varepsilon}$ depends linearly on $\mathbf{q}$, each problem \eqref{eq:reaction_diffusion} is linear with respect to its right-hand side, and the weights $\psi_{k}$ in \eqref{eq:quadrature_parameters} are independent of $\mathbf{q}$, the map $\mathbf{S}_{h}$ is linear.  Moreover, since $\| \mu \|_{\calM(\Omega)} \lesssim \| \mathbf{q} \|_{\mathbb{R}^{\ell}}$, Theorem~\ref{thm:measure_data} yields
\begin{equation}
\label{eq:discrete_state_error}
  \| ( \mathbf{S} - \mathbf{S}_{h} ) \mathbf{q} \|_{L^{2}(\Omega)}
  \lesssim h^{s + \theta - \frac{d}{2}} \| \mathbf{q} \|_{\mathbb{R}^{\ell}},
  \qquad
  h \leq 1.
\end{equation}
In addition, the family $\{\mathbf{S}_{h}\}_{0<h\leq1}$ is uniformly bounded in $h$ as a family of operators from $\mathbb{R}^{\ell}$ into $L^{2}(\Omega)$. Indeed,
\[
  \| \mathbf{S}_{h} \mathbf{q} \|_{L^{2}(\Omega)} \leq
\| ( \mathbf{S} - \mathbf{S}_{h} ) \mathbf{q} \|_{L^{2}(\Omega)}
+ \| \mathbf{S}\mathbf{q} \|_{L^{2}(\Omega)} \lesssim
\| \mathbf{q} \|_{\mathbb{R}^{\ell}}.
\]

We thus propose the following discretization of the control problem \eqref{eq:min}--\eqref{eq:state_equation_weak}: Find $\bar{\mathbf q}_h\in\mathbf Q_{ad}$ such that
\begin{equation}
\label{eq:discrete_min}
  j_h(\bar{\mathbf q}_h) = \min \left\{ j_{h}(\mathbf{q}) : \mathbf{q} \in \mathbf{Q}_{ad} \right\},
  \qquad
  j_{h}(\mathbf{q}) \coloneqq \tfrac{1}{2}
  \| \mathbf{S}_{h}\mathbf{q} - u_{\Omega} \|_{L^{2}(\Omega)}^{2}
  + \tfrac{\alpha}{2} \| \mathbf{q} \|_{\mathbb{R}^{\ell}}^{2},
\end{equation}
where $\mathbf{S}_{h}$ is the discrete control-to-state map defined in \eqref{eq:discrete_control_to_state_map}.
Since $j_h$ is continuous and strictly convex and $\mathbf{Q}_{ad}$ is compact and convex, problem \eqref{eq:discrete_min} admits a unique optimal solution $\bar{\mathbf{q}}_{h} \in \mathbf{Q}_{ad}$. Moreover, $\bar{\mathbf{q}}_{h} \in \mathbf{Q}_{ad}$ is optimal for \eqref{eq:discrete_min} if and only if
\begin{equation}
\label{eq:discrete_variational_inequality}
  j_{h}'(\bar{\mathbf{q}}_{h}) (\mathbf{q} - \bar{\mathbf{q}}_{h}) \geq 0
  \qquad \forall \mathbf{q} \in \mathbf{Q}_{ad} .
\end{equation}

We now present the main result of this section.

\begin{theorem}[error bounds]
\label{thm:convergence}
Let $\bar{\mathbf{q}}$ be the optimal solution of the optimal control problem \eqref{eq:min}--\eqref{eq:state_equation_weak}, and let $\bar{\mathbf{q}}_{h}$ be the optimal solution of the discrete optimal control problem \eqref{eq:discrete_min}. Then, for $h \leq 1$, we have
\begin{equation}
\label{eq:control_convergence}
  \| \bar{\mathbf{q}} - \bar{\mathbf{q}}_{h} \|_{\mathbb{R}^{\ell}}
  \lesssim  h^{s+\theta-\frac{d}{2}},
  \qquad
  \| \bar{u} - \mathbf{S}_{h} \bar{\mathbf{q}}_{h} \|_{L^{2}(\Omega)}
  \lesssim h^{s+\theta-\frac{d}{2}},
\end{equation}
where the hidden constants are independent of $h$ and may depend, in particular, on $\ell$, $\alpha$, $\mathbf a$, $\mathbf b$, and $\|u_\Omega\|_{L^2(\Omega)}$.
\end{theorem}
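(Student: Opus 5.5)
The plan is the standard argument for finite dimensional controls: test the two variational inequalities against each other, and use the strong convexity supplied by $\alpha>0$ together with the operator error bound \eqref{eq:discrete_state_error}. We do not attempt any duality-based estimate for the discrete adjoint. We work with the reduced functionals $j$ and $j_h$. For $\mathbf{q},\mathbf{r}\in\mathbb{R}^{\ell}$ we have
\[
j'(\mathbf{q})\mathbf{r} = (\mathbf{S}\mathbf{q}-u_\Omega,\mathbf{S}\mathbf{r})_{L^2(\Omega)} + \alpha(\mathbf{q},\mathbf{r})_{\mathbb{R}^{\ell}},
\]
and the same formula holds for $j_h'(\mathbf{q})\mathbf{r}$ with $\mathbf{S}$ replaced by $\mathbf{S}_h$. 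Both functionals are quadratic. In particular
\[
(j_h'(\mathbf{q})-j_h'(\mathbf{r}))(\mathbf{q}-\mathbf{r}) \geq \alpha\|\mathbf{q}-\mathbf{r}\|_{\mathbb{R}^{\ell}}^2 .
\]

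\emph{Step 1 (control error).} Set $\mathbf{q}=\bar{\mathbf{q}}_h$ in the continuous optimality condition of Theorem~\ref{thm:optimality}, written as $j'(\bar{\mathbf{q}})(\mathbf{q}-\bar{\mathbf{q}})\geq0$. Set $\mathbf{q}=\bar{\mathbf{q}}$ in \eqref{eq:discrete_variational_inequality}. Adding the two inequalities gives
\[
\alpha\|\bar{\mathbf{q}}-\bar{\mathbf{q}}_h\|^2_{\mathbb{R}^{\ell}} \leq (j_h'(\bar{\mathbf{q}})-j_h'(\bar{\mathbf{q}}_h))(\bar{\mathbf{q}}-\bar{\mathbf{q}}_h) \leq (j_h'(\bar{\mathbf{q}})-j'(\bar{\mathbf{q}}))(\bar{\mathbf{q}}-\bar{\mathbf{q}}_h).
\]
Write $\mathbf{r}=\bar{\mathbf{q}}-\bar{\mathbf{q}}_h$. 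The right-hand side then equals
\[
(\mathbf{S}_h\bar{\mathbf{q}}-u_\Omega,\mathbf{S}_h\mathbf{r}) - (\mathbf{S}\bar{\mathbf{q}}-u_\Omega,\mathbf{S}\mathbf{r}) = ((\mathbf{S}_h-\mathbf{S})\bar{\mathbf{q}},\mathbf{S}_h\mathbf{r}) + (\mathbf{S}\bar{\mathbf{q}}-u_\Omega,(\mathbf{S}_h-\mathbf{S})\mathbf{r}).
\]
We bound each term by Cauchy--Schwarz and apply \eqref{eq:discrete_state_error} to both $\bar{\mathbf{q}}$ and $\mathbf{r}$. We also use the uniform bound $\|\mathbf{S}_h\mathbf{r}\|_{L^2(\Omega)}\lesssim\|\mathbf{r}\|_{\mathbb{R}^{\ell}}$. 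Finally, $\|\bar{\mathbf{q}}\|_{\mathbb{R}^{\ell}}$ and $\|\mathbf{S}\bar{\mathbf{q}}-u_\Omega\|_{L^2(\Omega)}$ are bounded in terms of $\mathbf a$, $\mathbf b$, $\ell$ and $\|u_\Omega\|_{L^2(\Omega)}$, because $\mathbf{Q}_{ad}$ is bounded. Together these give
\[
\alpha\|\mathbf{r}\|^2_{\mathbb{R}^{\ell}}\lesssim h^{s+\theta-\frac d2}\|\mathbf{r}\|_{\mathbb{R}^{\ell}}.
\]
Dividing by $\|\mathbf{r}\|_{\mathbb{R}^{\ell}}$ (the case $\mathbf{r}=0$ being trivial) yields the first estimate in \eqref{eq:control_convergence}.

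\emph{Step 2 (state error).} We split
\[
\bar u-\mathbf{S}_h\bar{\mathbf{q}}_h=(\mathbf{S}-\mathbf{S}_h)\bar{\mathbf{q}}+\mathbf{S}_h(\bar{\mathbf{q}}-\bar{\mathbf{q}}_h).
\]
The first term is $\lesssim h^{s+\theta-\frac d2}$ by \eqref{eq:discrete_state_error} and the boundedness of $\mathbf{Q}_{ad}$. The second term is bounded, using the uniform boundedness of $\{\mathbf{S}_h\}$, by $\|\bar{\mathbf{q}}-\bar{\mathbf{q}}_h\|_{\mathbb{R}^{\ell}}$, and Step 1 controls this.

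The only delicate point is the mixed term $((\mathbf{S}_h-\mathbf{S})\bar{\mathbf{q}},\mathbf{S}_h\mathbf{r})$ in Step 1. It requires $\mathbf{S}_h$ to be bounded uniformly in $h$, and the paper has already established this for $0<h\leq1$. We must also check that $j_h'$ has the stated form, i.e.\ that $\mathbf{S}_h$ is linear, which has also been shown. Since the controls live in $\mathbb{R}^{\ell}$, no projection or discrete adjoint argument is needed.
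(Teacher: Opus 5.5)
Your proof is correct and follows the paper's argument: you test the continuous and discrete variational inequalities against each other, use the $\alpha$-strong monotonicity, and close with \eqref{eq:discrete_state_error} and the uniform boundedness of $\mathbf{S}_h$. The only difference is cosmetic. You use the strong monotonicity of $j_h'$, evaluate the consistency error $j_h'-j'$ at $\bar{\mathbf{q}}$, and split the state error through $(\mathbf{S}-\mathbf{S}_h)\bar{\mathbf{q}}$; the paper instead uses the strong monotonicity of $j'$ and works at $\bar{\mathbf{q}}_h$, and since both controls lie in the bounded set $\mathbf{Q}_{ad}$ the two versions are interchangeable.
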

\begin{proof}
Basic computations show the following identity: $(j'(\bar{\mathbf{q}}) - j'(\bar{\mathbf{q}}_h)) (\bar{\mathbf{q}} - \bar{\mathbf{q}}_h) = \alpha \| \bar{\mathbf{q}} - \bar{\mathbf{q}}_{h} \|_{\mathbb{R}^{\ell}}^{2} + \| \mathbf{S}(\bar{\mathbf{q}} - \bar{\mathbf{q}}_h)\|^2_{L^2(\Omega)}$. As a result, we obtain the bound
\[
\alpha \| \bar{\mathbf{q}} - \bar{\mathbf{q}}_{h} \|_{\mathbb{R}^{\ell}}^{2}
\leq
(j'(\bar{\mathbf{q}}) - j'(\bar{\mathbf{q}}_h)) (\bar{\mathbf{q}} - \bar{\mathbf{q}}_h).
\]
On the other hand, if we set $\mathbf{q} = \bar{\mathbf{q}}_{h}$ in
\eqref{eq:basic_variational_inequality} and $\mathbf{q} = \bar{\mathbf{q}}$ in \eqref{eq:discrete_variational_inequality}, we obtain  $j'(\bar{\mathbf{q}}) (\bar{\mathbf{q}} - \bar{\mathbf{q}}_{h}) \leq 0$ and
$j_{h}'(\bar{\mathbf{q}}_{h}) (\bar{\mathbf{q}} - \bar{\mathbf{q}}_{h}) \geq
0$, respectively. Combining these inequalities with the previous bound, we obtain
\begin{equation}
\label{eq:instrumental}
  \alpha \| \bar{\mathbf{q}} - \bar{\mathbf{q}}_{h}
  \|_{\mathbb{R}^{\ell}}^{2}
  \leq
  \bigl( j_{h}'(\bar{\mathbf{q}}_{h}) - j'(\bar{\mathbf{q}}_{h}) \bigr)
  (\bar{\mathbf{q}} - \bar{\mathbf{q}}_{h}).
\end{equation}
We now add and subtract to the right-hand side of the previous bound the term $( \mathbf{S}\bar{\mathbf{q}}_h - u_{\Omega} , \mathbf{S}_{h}(\bar{\mathbf{q}} - \bar{\mathbf{q}}_h))_{L^{2}(\Omega)}$ to obtain
\[
 \alpha \| \bar{\mathbf{q}} - \bar{\mathbf{q}}_{h}
  \|_{\mathbb{R}^{\ell}}^{2}
  \leq
  ( (\mathbf{S}_h  - \mathbf{S}) \bar{\mathbf{q}}_h, \mathbf{S}_{h}(\bar{\mathbf{q}} - \bar{\mathbf{q}}_h))_{L^{2}(\Omega)}
  +
  ( \mathbf{S} \bar{\mathbf{q}}_h - u_{\Omega}, (\mathbf{S}_{h}-\mathbf{S})(\bar{\mathbf{q}} - \bar{\mathbf{q}}_h))_{L^{2}(\Omega)}.
\]
Since $\{\mathbf{S}_{h}\}_{0<h\leq1}$ is uniformly bounded in $h$, and the quantities $\| \bar{\mathbf{q}}_{h} \|_{\mathbb{R}^{\ell}}$ and
$\| \mathbf{S}\bar{\mathbf{q}}_{h} - u_{\Omega} \|_{L^{2}(\Omega)}$ are bounded uniformly in $h$ as well because $\bar{\mathbf{q}}_{h} \in \mathbf{Q}_{ad}$, we obtain
\[
\alpha \| \bar{\mathbf{q}} - \bar{\mathbf{q}}_{h}
  \|^2_{\mathbb{R}^{\ell}}
  \lesssim
  \| (\mathbf{S}_h  - \mathbf{S}) \bar{\mathbf{q}}_h \|_{L^2(\Omega)}
  \|\bar{\mathbf{q}} - \bar{\mathbf{q}}_h \|_{\mathbb{R}^{\ell}}
  +
  \| (\mathbf{S}_{h}-\mathbf{S})(\bar{\mathbf{q}} - \bar{\mathbf{q}}_h)\|_{L^{2}(\Omega)}
  \lesssim h^{\gamma} \| \bar{\mathbf{q}} - \bar{\mathbf{q}}_{h}
  \|_{\mathbb{R}^{\ell}},
\]
where we have also used \eqref{eq:discrete_state_error}. This yields the first estimate in
\eqref{eq:control_convergence}. The second one follows from
$
  \| \bar{u} - \mathbf{S}_{h} \bar{\mathbf{q}}_{h} \|_{L^{2}(\Omega)}
  \leq
  \| \mathbf{S} ( \bar{\mathbf{q}} - \bar{\mathbf{q}}_{h} )
  \|_{L^{2}(\Omega)}
  +
  \| ( \mathbf{S} - \mathbf{S}_{h} ) \bar{\mathbf{q}}_{h}
  \|_{L^{2}(\Omega)}
  \lesssim
  \| \bar{\mathbf{q}} - \bar{\mathbf{q}}_{h} \|_{\mathbb{R}^{\ell}}
  + h^{\gamma} \lesssim h^{\gamma},
$
where we have used the first estimate in \eqref{eq:control_convergence}.
\end{proof}

\begin{remark}[on the rate]
\label{rem:rate}
An appropriate choice of $\theta$ in \eqref{eq:control_convergence} yields the rate $\mathcal{O}(h^{2s-\frac{d}{2}-\delta})$, with $\delta > 0$ arbitrarily small, which is not optimal. For $s \uparrow 1$ and $d=2$ it approaches $\mathcal{O}(h^{1-\delta})$, whereas the corresponding local problem admits $\mathcal{O}(h^{2}|\log h|^{3})$ \cite[Theorem 5.8]{MR4838450}. Our argument only exploits the $L^{2}(\Omega)$-error of the state approximation, while the arguments therein rely on local $L^{\infty}$ estimates. To the best of our knowledge, local $L^{\infty}$-error bounds are not available for FE approximations of $(-\Delta)^s\phi=g$, and obtaining them appears to be an open problem of independent interest.
\end{remark}

\section*{Funding}

EO: USM, CHILE through USM project 2026 PI LIR 26 06.

AJS: National Science Foundation, USA grant DMS-2409918.

\section*{Data availability}
No data was used for the research described in the article.

\bibliographystyle{plain}
\bibliography{biblio}

@article {MR2086168,
    AUTHOR = {Berm\'udez, A. and Gamallo, P. and Rodr\'iguez,
              R.},
     TITLE = {Finite element methods in local active control of sound},
   JOURNAL = {SIAM J. Control Optim.},
  FJOURNAL = {SIAM Journal on Control and Optimization},
    VOLUME = {43},
      YEAR = {2004},
    NUMBER = {2},
     PAGES = {437--465},
      ISSN = {0363-0129,1095-7138},
   MRCLASS = {49J20 (49K20 65N30 76M10 76N25 76Q05)},
  MRNUMBER = {2086168},
MRREVIEWER = {Ruxandra\ Stavre},
       DOI = {10.1137/S0363012903431785},
       URL = {https://doi.org/10.1137/S0363012903431785},
}

@article {MR4838450,
    AUTHOR = {Ot\'arola, E.},
     TITLE = {Semilinear optimal control with {D}irac measures},
   JOURNAL = {IMA J. Numer. Anal.},
  FJOURNAL = {IMA Journal of Numerical Analysis},
    VOLUME = {44},
      YEAR = {2024},
    NUMBER = {6},
     PAGES = {3573--3594},
      ISSN = {0272-4979,1464-3642},
   MRCLASS = {49J20 (35J25 35J61 35R06 49K20 49M41)},
  MRNUMBER = {4838450},
MRREVIEWER = {Lino\ J.\ \'Alvarez-V\'azquez},
       DOI = {10.1093/imanum/drad091},
       URL = {https://doi.org/10.1093/imanum/drad091},
}

@book {MR1681462,
    AUTHOR = {Folland, G.~B.},
     TITLE = {Real analysis},
    SERIES = {Pure and Applied Mathematics (New York)},
   EDITION = {Second},
      NOTE = {Modern techniques and their applications,
              A Wiley-Interscience Publication},
 PUBLISHER = {John Wiley \& Sons, Inc., New York},
      YEAR = {1999},
     PAGES = {xvi+386},
      ISBN = {0-471-31716-0},
   MRCLASS = {00A05 (26-01 28-01 46-01)},
  MRNUMBER = {1681462},
}

@article {MR5023054,
    AUTHOR = {Salgado, A.~J. and Sawyer, S.~E.},
     TITLE = {A semianalytic diagonalization finite element method for the
              spectral fractional {L}aplacian},
   JOURNAL = {SIAM J. Sci. Comput.},
  FJOURNAL = {SIAM Journal on Scientific Computing},
    VOLUME = {48},
      YEAR = {2026},
    NUMBER = {1},
     PAGES = {A236--A260},
      ISSN = {1064-8275,1095-7197},
   MRCLASS = {65N30 (26A33 33C10 35R11 41A55 65N12)},
  MRNUMBER = {5023054},
       DOI = {10.1137/24M1699309},
       URL = {https://doi.org/10.1137/24M1699309},
}

@misc{OtarolaSalgado2026,
  author       = {Ot\'arola, E. and Salgado, A.~J.},
  title        = {The spectral fractional {L}aplacian with measure valued
                  right hand sides: analysis and approximation},
  year         = {2026},
  eprint       = {2602.11423},
  archivePrefix = {arXiv},
  primaryClass = {math.NA},
  howpublished = {arXiv:2602.11423}
}

@incollection {MR2648380,
    AUTHOR = {Nochetto, R.~H. and Siebert, K.~G. and Veeser,
              A.},
     TITLE = {Theory of adaptive finite element methods: an introduction},
 BOOKTITLE = {Multiscale, nonlinear and adaptive approximation},
 PUBLISHER = {Springer, Berlin},
      YEAR = {2009},
      ISBN = {978-3-642-03412-1},
   MRCLASS = {65N30 (65N15 65N50)},
  MRNUMBER = {2648380},
MRREVIEWER = {Nicolae\ Pop},
       DOI = {10.1007/978-3-642-03413-8\_12},
       URL = {https://doi.org/10.1007/978-3-642-03413-8_12},
}

@book {MR2424078,
    AUTHOR = {Adams, R.~A. and Fournier, J.~F.},
     TITLE = {Sobolev spaces},
    SERIES = {Pure and Applied Mathematics (Amsterdam)},
    VOLUME = {140},
   EDITION = {Second},
 PUBLISHER = {Elsevier/Academic Press, Amsterdam},
      YEAR = {2003},
     PAGES = {xiv+305},
      ISBN = {0-12-044143-8},
   MRCLASS = {46E35 (46-01 46-02 46B70 46Exx)},
  MRNUMBER = {2424078},
}

@article {MR2944369,
    AUTHOR = {Di Nezza, E. and Palatucci, G. and Valdinoci,
              E.},
     TITLE = {Hitchhiker's guide to the fractional {S}obolev spaces},
   JOURNAL = {Bull. Sci. Math.},
  FJOURNAL = {Bulletin des Sciences Math\'ematiques},
    VOLUME = {136},
      YEAR = {2012},
    NUMBER = {5},
     PAGES = {521--573},
      ISSN = {0007-4497,1952-4773},
   MRCLASS = {46E35 (35A23 35S05 35S30)},
  MRNUMBER = {2944369},
MRREVIEWER = {Lanzhe\ Liu},
       DOI = {10.1016/j.bulsci.2011.12.004},
       URL = {https://doi.org/10.1016/j.bulsci.2011.12.004},
}

@article {MR3356020,
    AUTHOR = {Bonito, A. and Pasciak, J.~E.},
     TITLE = {Numerical approximation of fractional powers of elliptic
              operators},
   JOURNAL = {Math. Comp.},
  FJOURNAL = {Mathematics of Computation},
    VOLUME = {84},
      YEAR = {2015},
    NUMBER = {295},
     PAGES = {2083--2110},
      ISSN = {0025-5718,1088-6842},
   MRCLASS = {65N30 (65R20)},
  MRNUMBER = {3356020},
MRREVIEWER = {Igor\ Bock},
       DOI = {10.1090/S0025-5718-2015-02937-8},
       URL = {https://doi.org/10.1090/S0025-5718-2015-02937-8},
}

@article {MR3343061,
    AUTHOR = {Chandler-Wilde, S. N. and Hewett, D. P. and Moiola, A.},
     TITLE = {Interpolation of {H}ilbert and {S}obolev spaces: quantitative
              estimates and counterexamples},
   JOURNAL = {Mathematika},
  FJOURNAL = {Mathematika. A Journal of Pure and Applied Mathematics},
    VOLUME = {61},
      YEAR = {2015},
    NUMBER = {2},
     PAGES = {414--443},
      ISSN = {0025-5793,2041-7942},
   MRCLASS = {46B70 (46E35)},
  MRNUMBER = {3343061},
MRREVIEWER = {Oscar\ Dom\'inguez},
       DOI = {10.1112/S0025579314000278},
       URL = {https://doi.org/10.1112/S0025579314000278},
}

@book {MR1742312,
    AUTHOR = {McLean, W.},
     TITLE = {Strongly elliptic systems and boundary integral equations},
 PUBLISHER = {Cambridge University Press, Cambridge},
      YEAR = {2000},
     PAGES = {xiv+357},
      ISBN = {0-521-66332-6; 0-521-66375-X},
   MRCLASS = {35J45 (47F05 47G10 47N20 65N38)},
  MRNUMBER = {1742312},
MRREVIEWER = {Dorina\ I.\ Mitrea},
}

@book {MR2328004,
    AUTHOR = {Tartar, Luc},
     TITLE = {An introduction to {S}obolev spaces and interpolation spaces},
    SERIES = {Lecture Notes of the Unione Matematica Italiana},
    VOLUME = {3},
 PUBLISHER = {Springer, Berlin; UMI, Bologna},
      YEAR = {2007},
     PAGES = {xxvi+218},
      ISBN = {978-3-540-71482-8; 3-540-71482-0},
   MRCLASS = {46E35 (41A05 46B70 46M35)},
  MRNUMBER = {2328004},
MRREVIEWER = {Joan\ L.\ Cerd\`a},
}

@article {MR2754080,
    AUTHOR = {Stinga, P.~R. and Torrea, J.~L.},
     TITLE = {Extension problem and {H}arnack's inequality for some
              fractional operators},
   JOURNAL = {Comm. Partial Differential Equations},
  FJOURNAL = {Communications in Partial Differential Equations},
    VOLUME = {35},
      YEAR = {2010},
    NUMBER = {11},
     PAGES = {2092--2122},
      ISSN = {0360-5302,1532-4133},
   MRCLASS = {35R11 (35B45 35B50 35B51 35B65)},
  MRNUMBER = {2754080},
MRREVIEWER = {Nasser-eddine\ Tatar},
       DOI = {10.1080/03605301003735680},
       URL = {https://doi.org/10.1080/03605301003735680},
}

@article {MR2646117,
    AUTHOR = {Cabr\'e, X. and Tan, J.},
     TITLE = {Positive solutions of nonlinear problems involving the square
              root of the {L}aplacian},
   JOURNAL = {Adv. Math.},
  FJOURNAL = {Advances in Mathematics},
    VOLUME = {224},
      YEAR = {2010},
    NUMBER = {5},
     PAGES = {2052--2093},
      ISSN = {0001-8708,1090-2082},
   MRCLASS = {35J10 (35S05)},
  MRNUMBER = {2646117},
MRREVIEWER = {B.\ Kellogg},
       DOI = {10.1016/j.aim.2010.01.025},
       URL = {https://doi.org/10.1016/j.aim.2010.01.025},
}

@article {MR2825595,
    AUTHOR = {Capella, A. and D\'avila, J. and Dupaigne, L. and
              Sire, Y.},
     TITLE = {Regularity of radial extremal solutions for some non-local
              semilinear equations},
   JOURNAL = {Comm. Partial Differential Equations},
  FJOURNAL = {Communications in Partial Differential Equations},
    VOLUME = {36},
      YEAR = {2011},
    NUMBER = {8},
     PAGES = {1353--1384},
      ISSN = {0360-5302,1532-4133},
   MRCLASS = {35R11 (35B65 35D30 35J25 35J91 35S10)},
  MRNUMBER = {2825595},
MRREVIEWER = {William\ Margulies},
       DOI = {10.1080/03605302.2011.562954},
       URL = {https://doi.org/10.1080/03605302.2011.562954},
}

\end{document}